\newcommand\mynobreakpar{\par\nobreak\@afterheading} 
\DeclarePairedDelimiter{\ceil}{\lceil}{\rceil}
\newtheorem{theorem}{Теорема}[section]
\newtheorem{corollary}{Следствие}[theorem]
\newtheorem{proposition}[theorem]{Утверждение}
\newsavebox{\mybox}
\newlength{\mywidth}
\newlength{\myheight}
\newlength{\myline}
\newlength{\myoffset}
\newcommand{\mysqrt}[1]%
{\setlength{\myline}{.2ex}%
\addtolength{\myline}{.06pt}%
\setlength{\myoffset}{.9em}
\addtolength{\myoffset}{-2pt}
\savebox{\mybox}{$\displaystyle\sqrt{#1}$}%
\settoheight{\myheight}{\usebox{\mybox}}%
\addtolength{\myheight}{-.3ex}
\settowidth{\mywidth}{\usebox{\mybox}}%
\addtolength{\mywidth}{-\myoffset}%
 \rlap{\usebox{\mybox}}\hspace{\myoffset}{\raisebox{\myheight}{\rule{\mywidth}{\myline}}}}
\title{Ускоренные проксимальные оболочки: применение к покомпонентому методу\footnote{Исследования Д.А. Пасечнюка были поддержаны стипендией А.М. Райгородского в области оптимизации и грантом РФФИ 19-31-51001 Научное наставничество. Исследования А.С. Аникина были поддержаны грантом РФФИ 18-29-03071 мк. Работа В.В. Матюхина выполнена при поддержке Министерства науки и высшего образования Российской Федерации (госзадание) №075-00337-20-03, номер проекта 0714-2020-0005.}}
\author{Пасечнюк Д.А., Аникин А.С., Матюхин В.В.}
\begin{document}

\maketitle

\begin{center}
\begin{minipage}{0.75\textwidth}
    \quadСтатья посвящена одному частному случаю применения универсальных ускоренных проксимальных оболочек для получения вычислительно эффективных ускоренных вариантов методов, использующихся для решения различных частных постановок оптимизационных задач. В данной работе предлагается проксимально ускоренный покомпонентный градиентный метод с эффективной алгоритмической сложностью итерации, позволяющий существенно учитывать разреженность решаемой задачи, и рассматривается пример применения предлагаемого подхода для решения задачи оптимизации функции вида SoftMax, для которой описываемый метод позволяет ослабить зависимость вычислительной сложности решения от размерности $n$ задачи в $\mathcal{O}(\sqrt{n})$ раз, и демонстрирует на практике более быструю по сравнению со стандартными методами сходимость. 
    \newline

    \textbf{Ключевые слова:} проксимальный ускоренный метод, каталист, ускоренный покомпонентный метод, SoftMax, LogSumExp. \newline
    
\end{minipage}
\end{center}

\section{Введение}

Одним из важнейших теоретических результатов в выпуклой оптимизации является разработка ускоренных методов оптимизации \cite{nesterov2018lectures}. На начальном этапе развития этой концепции было предложено множество ускоренных вариантов различных методов, применяющихся к решению многих задач выпуклой оптимизации, однако каждый такой случай требовал отдельного, частного рассмотрения возможности ускорения, ввиду чего предлагаемые конструкции были существенно различны и не позволяли предполагать способ их обобщения. Важным шагом к разработке универсальной схемы ускорения методов оптимизации стали работы, в которых предлагался и исследовался метод, названный каталист, основанный на идее ускоренного проксимального градиентного метода \cite{parikh2014proximal, rockafellar1976monotone} и позволяющий ускорять другие методы оптимизации, используя их для последовательного решения ряда резуляризованных по Моро-Иосиде вспомогательных задач \cite{lin2015universal, lin2017catalyst}. В продолжение этих идей в дальнейшем было предложено множество вариантов применения данного метода и его модификаций \cite{ivanova2019adaptive, kulunchakov2019generic, paquette2017catalyst}. Среди наиболее свежих, на момент написания данной статьи, результатов были также описаны обобщения обсуждаемого подхода на тензорные методы \cite{bubeck2019near, doikov2020contracting, monteiro2013accelerated, gasnikov2020accelerated}. Соответствующее представление ускоренной проксимальной оболочки, если опираться на известные авторам сведения, является наиболее общим из описанных в литературе, и потому в данной работе внимание будет обращено прежде всего именно на методы, предложенные в последней из цитируемых выше работ. 

Основной мотив данной работы состоит в том, чтобы описать возможности практического применения универсальных ускоренных проксимальных оболочек для конструирования вычислительно и оракульно эффективных методов оптимизации. Рассмотрим классический покомпонентный метод \cite{bubeck2014convex}, итерация которого для выпуклой функции $f: \mathbb{R}^n \rightarrow \mathbb{R}$ имеет вид:
$$x_{k+1}^i = x_k^i - \eta \nabla_i f(x_k), \quad i \sim \mathcal{U}\{1,...,n\},\;\;\eta > 0.$$
Одним из многих приложений данного метода является оптимизация функционалов, вычисление одной компоненты градиента которых значимо эффективнее, чем вычисление полного вектора градиента; в частности, многие задачи в случае разреженных постановок удовлетворяют данному условию. Однако оракульная сложность данного метода при условии остановки метода при достижении $\varepsilon$-малости невязки по значению функции составляет $\mathcal{O}\left(n \frac{\overline{L} R^2}{\varepsilon}\right)$, где $R^2 = \|x_0 - x_*\|_2^2$, $\overline{L}=\frac{1}{n} \sum_{i=1}^n L_i$~--- среднее констант Липшица компонент градиента, притом эта оценка не является оптимальной для класса выпуклых задач. Рассмотрим теперь ускоренный покомпонентный метод, предложенный Ю.Е.~Нестеровым \cite{nesterov2017efficiency},~--- оракульная сложность данного метода соответствует оптимальной оценке: $\mathcal{O}\left(n\sqrt{\frac{\widetilde{L} R^2}{\varepsilon}}\right)$, где $\sqrt{\widetilde{L}} = \frac{1}{n} \sum_{i=1}^n \sqrt{L_i}$ ~--- среднее квадратных корней из констант Липшица компонент градиента. Вместе с тем, ситуация кардинально меняется в случае рассмотрения алгоритмической сложности метода: пусть даже вычисление одной компоненты градиента имеет сложность $\mathcal{O}(s)$, $s \ll n$, сложность итерации ускоренного покомпонентного метода будет составлять $\mathcal{O}(n)$, в отличие от стандартного метода, сложность итерации которого есть $\mathcal{O}(s)$,~--- содержательно это означает, что степень разреженности задачи при применении ускоренного покомпонентного метода не влияет существенно на сложность алгоритма, и кроме того сложность в таком случае квадратично зависит от размерности задачи: вместе это в некоторой степени обесценивает применение покомпонентного метода в данном случае. Таким образом, интересной задачей является построение ускоренного покомпонентного метода, сложность итерации которого, как и в стандартном варианте метода, составляет $\mathcal{O}(s)$, при сохранении оптимальной оракульной сложности~--- в данной работе это удаётся осуществить благодаря применению универсальной ускоренной проксимальной оболочки ``ускоренный метаалгоритм`` \cite{gasnikov2020accelerated}.

Данная статья состоит из двух основных разделов. В разделе \ref{section1} описываются теоретические результаты о сходимости и алгоритмической сложности покомпонентного метода, ускоренного путём применения оболочки ``ускоренный метаалгоритм``. В разделе \ref{section2} на примере задачи оптимизации функционала вида SoftMax экспериментально проверяется эффективность метода в отношении времени его работы, описываются возможности его вычислительно эффективной имплементации и осуществляется сравнение со стандартными методами. 

\section{Теоретические гарантии} \label{section1}

Рассмотрим следующую задачу оптимизации функции $f: \mathbb{R}^n \rightarrow \mathbb{R}$:
$$\min_{x \in \mathbb{R}^n} f(x),$$
при таких условиях:
\begin{enumerate}
    \item $f$ дифференцируема на $\mathbb{R}^n$;
    \item $f$ выпукла на $\mathbb{R}^n$;
    \item $\nabla_i f$ удовлетворяет условию Липшица: $\exists L_{i} \in \mathbb{R}:\;\forall x \in \mathbb{R}^n, u \in \mathbb{R}$
    $$\left|\nabla_{i} f\left(x+u e_{i}\right)-\nabla_{i} f(x)\right| \leq  L_i|u|,$$
    где $e_i$~--- $i$-тый орт базиса, $i \in \{1,...,n\}$;
    \item $\nabla f$ удовлетворяет условию Липшица с константой $L$.
\end{enumerate}

Обратимся к содержанию работы \cite{gasnikov2020accelerated}, где предложен общий вариант ``ускоренного метаалгоритма`` решения задач выпуклой оптимизации для композитных функционалов вида $F(x) = f(x) + g(x)$. Для рассматриваемой постановки задачи такая общность не требуется, достаточно применить частный случай описанной схемы при $p=1$, $f \equiv 0$ (используются обозначения соответствующей работы), в котором описанная оболочка принимает вид ускоренного проксимального метода. Псевдокод используемого метода представлен в листинге \ref{am}.

\begin{wrapfigure}{r}{0.51\textwidth}
\begin{minipage}{0.51\textwidth}
\begin{algorithm}[H] \label{am}
\SetAlgoLined
    \textbf{Вход:} $H > 0$, $x_0 \in \mathbb{R}^n$\;
    \vspace{0.2cm}

    $\lambda \leftarrow \nicefrac{1}{2H}$\;
    $A_0 \leftarrow 0$; $v_0 \leftarrow x_0$\;
    \vspace{0.2cm}
    
    \For{k = 0, ..., $\widetilde{N}-1$}
    {
        $\displaystyle a_{k+1} \leftarrow \frac{\lambda + \sqrt{\lambda^2 + 4 \lambda A_k}}{2}$\;
        $A_{k+1} \leftarrow A_k + a_{k+1}$\;
        
        \vspace{0.2cm}
        $\displaystyle \widetilde{x}_k \leftarrow \frac{A_k v_k + a_{k+1} x_k}{A_{k+1}}$\;
        
        \vspace{0.2cm}
        Посредством запуска метода $\mathcal{M}$\\
        найти с точностью $\varepsilon$ по аргументу\\
        решение вспомогательной задачи:\\
        \vspace{0.1cm}
        $\displaystyle v_{k+1} \in \text{Arg}^{\varepsilon} \min_{y \in \mathbb{R}^n} \left\{ f(y) + \frac{H}{2}\|y - \widetilde{x}_k\|^2_2 \right\}$\;
        \vspace{0.5cm}
        
        $\displaystyle x_{k+1} \leftarrow x_k - a_{k+1} \nabla f(v_{k+1})$\;
    }
    
    \Return $v_{\widetilde{N}}$\;
    
    \caption{Ускоренный метаалгоритм для метода $\mathcal{M}$ первого порядка}
\end{algorithm}
\vspace{0.5cm}

\begin{algorithm}[H] \label{cdm}
\SetAlgoLined
    \textbf{Вход:} $y_0 \in \mathbb{R}^n$\;
    \vspace{0.2cm}
    
    $Z \leftarrow \sum_{i=1}^n (H + L_i)$\;
    $p_i \leftarrow (H + L_i) / Z,\quad i \in \{1,...,n\}$\;
    Дискретное вероятностное\\ распределение $\pi$
    с вероятностями $p_i$\;
    
    \vspace{0.2cm}
    
    \For{k = 0, ..., $N-1$}
    {
        $i \sim \pi\{1,...,n\}$\;
        $y_{k+1} \leftarrow y_k$\;
        $\displaystyle y_{k+1}^i = y_k^i - \frac{1}{H + L_i} \nabla_i F(y_k)$\; 
    }
    
    \Return $y_N$\;
    
    \caption{Покомпонентный метод}
\end{algorithm}
\end{minipage}
\vspace{0.5cm}
\end{wrapfigure}

Прежде чем сформулировать какие-либо результаты о сходимости, следует подробнее рассмотреть вопрос о решении вспомогательной задачи~--- её аналитическое решение доступно лишь в редких случаях, и потому необходимо решать её численными методами, а значит неточно. Вспомогательную задачу допустимо решать до выполнения следующего условия останова (\cite{kamzolov2020optimal}, Appendix B):
\begin{equation} \label{monteiro}
    \left\|\nabla\left\{F(y_\star) := f(y_\star) + \frac{H}{2}\|y_\star-\widetilde{x}_k\|^2_2\right\}\right\|_2 \leq
\end{equation}
$$\leq \frac{H}{2}\|y_\star-\widetilde{x}_k\|_2.$$
Ввиду того, что $\|\nabla F(y_*)\|_2 = 0$, а также ввиду $(L + H)$-липшицевости $\nabla F$, имеем:
\begin{equation} \label{smooth}
    \|\nabla F(y_\star)\|_2 \leq (L + H) \|y_\star - y_*\|_2.
\end{equation}
Выписав неравенство треугольника: 
$\|\widetilde{x}_k - y_*\|_2 - \|y_\star - y_*\|_2 \leq \|y_\star - \widetilde{x}_k\|_2$,
и воспользовавшись вместе неравенствами \eqref{monteiro}, \eqref{smooth}, получаем окончательное условие останова:
\begin{equation} \label{crit}
    \|y_\star - y_*\|_2 \leq \frac{H}{3H + 2L} \|\widetilde{x}_k - y_*\|_2.
\end{equation}
Содержательно отсюда следует, что необходимая точность решения вспомогательной задачи по аргументу не зависит от требуемой точности решения общей задачи, что позволяет значимо упростить получение дальнейших результатов.

Рассмотрим теперь основной применяемый для решения вспомогательных задач метод: содержание покомпонентного метода \cite{nesterov2012efficiency} (в частном случае $\gamma = 1$) представлено в листинге \ref{cdm}. Для данного метода в случае рассматриваемых вспомогательных задач справедлив результат:

\begin{theorem}{(\cite{bubeck2014convex}, theorem 6.8)} \label{cdm_conv}
    Пусть $F$ является $H$-сильно выпуклой относительно $\|\cdot\|_2$. Тогда для последовательности $\{y_k\}_{k=1}^N$, генерируемой покомпонентным методом, выполняется
    \begin{equation} \label{f_conv}
        \mathbb{E}[F(y_N)] - F(y_*) \leq \left(1 - \frac{1}{\kappa}\right)^N (F(y_0) - F(y_*)),\quad \text{где}\quad\kappa = \frac{H}{Z},\;\;Z = \sum_{i=1}^n (H + L_i).
    \end{equation}
\end{theorem}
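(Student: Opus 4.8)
The plan is to establish a single contraction inequality in expectation, valid for one iteration of the coordinate method, and then iterate it $N$ times. The two ingredients are a coordinate-wise descent lemma, which exploits that $\nabla_i F$ is $(H+L_i)$-Lipschitz, and the gradient-domination inequality coming from $H$-strong convexity. The importance-sampling probabilities $p_i = (H+L_i)/Z$ are chosen precisely so that these two ingredients combine without residual coordinate-dependent weights.

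First I would record the coordinate smoothness of $F$. Since $\nabla_i f$ is $L_i$-Lipschitz (assumption~3) and the regularizer $\frac{H}{2}\|y-\widetilde{x}_k\|_2^2$ contributes exactly $H$ to the $i$-th coordinate Lipschitz constant, $\nabla_i F$ is $(H+L_i)$-Lipschitz. Applying the standard quadratic upper bound along the $i$-th coordinate direction with the step $y_{k+1}^i - y_k^i = -\frac{1}{H+L_i}\nabla_i F(y_k)$ prescribed by the method gives
\begin{equation*}
    F(y_{k+1}) \leq F(y_k) - \frac{1}{2(H+L_i)}\bigl(\nabla_i F(y_k)\bigr)^2 .
\end{equation*}

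Next I would take the expectation over the random index $i \sim \pi$ conditioned on $y_k$. Here the choice $p_i = (H+L_i)/Z$ is decisive: the factor $H+L_i$ in the probability cancels the $H+L_i$ in the denominator of the step size, so that
\begin{equation*}
    \mathbb{E}_i\!\left[F(y_{k+1}) \mid y_k\right] \leq F(y_k) - \frac{1}{2Z}\sum_{i=1}^n \bigl(\nabla_i F(y_k)\bigr)^2 = F(y_k) - \frac{1}{2Z}\|\nabla F(y_k)\|_2^2 .
\end{equation*}
This uniform weighting $\frac{1}{2Z}$ of the full squared gradient norm is the structural gain produced by the non-uniform sampling, and recognizing this cancellation is the conceptual crux of the argument.

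Finally I would invoke $H$-strong convexity in its gradient-domination form $\|\nabla F(y_k)\|_2^2 \geq 2H\bigl(F(y_k) - F(y_*)\bigr)$, obtained by minimizing the lower bound $F(y_*) \geq F(y_k) + \langle \nabla F(y_k), y_* - y_k\rangle + \frac{H}{2}\|y_* - y_k\|_2^2$ over its right-hand side. Substituting yields the one-step contraction
\begin{equation*}
    \mathbb{E}_i\!\left[F(y_{k+1}) \mid y_k\right] - F(y_*) \leq \Bigl(1 - \tfrac{H}{Z}\Bigr)\bigl(F(y_k) - F(y_*)\bigr) .
\end{equation*}
Passing to total expectations by the tower property and chaining this inequality for $k = 0,\ldots,N-1$ produces the claimed geometric rate with contraction factor $1 - H/Z$. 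I expect no serious obstacle beyond the two steps above: the descent lemma and the strong-convexity bound are standard, so the only point demanding genuine care is the sampling step, where the importance weights must cancel exactly against the step sizes; everything else is routine bookkeeping of the conditional and total expectations.
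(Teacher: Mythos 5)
Your proof is correct and is essentially the canonical argument: the paper itself gives no proof of this theorem, importing it verbatim from the cited source (\cite{bubeck2014convex}, theorem 6.8), and your three steps --- coordinate-wise descent with step $1/(H+L_i)$, cancellation of the importance weights $p_i=(H+L_i)/Z$ against the step sizes to get the uniform $\frac{1}{2Z}\|\nabla F(y_k)\|_2^2$ decrease, then gradient domination from $H$-strong convexity --- reproduce exactly that source's proof. One remark worth recording: your derivation yields the contraction factor $1-H/Z$, which shows that the paper's ``$\kappa = H/Z$'' is a typo for $\kappa = Z/H$ (with $\kappa=H/Z$ one would have $1-1/\kappa<0$, since $Z\geq nH$), and this reading is confirmed by the paper's own Corollary \ref{cor_num}, where the iteration count scales as $Z/H$.
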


\noindentИспользуя данный результат, сформулируем утверждение о числе итераций покомпонентного метода, достаточном для выполнения полученного выше условия останова.

\begin{corollary} \label{cor_num}
    Матожидание $\mathbb{E}[y_N]$ точки, являющейся результатом работы покомпонентного метода, удовлетворяет условию \eqref{crit} достижения достаточной точности решения вспомогательной задачи ускоренного метаалгоритма в том случае, если для числа итераций метода выполнено
    \begin{equation}\label{cor}
        N \geq N(\widetilde{\varepsilon}) = \ceil[\Bigg]{\frac{Z}{H} \ln{\left\{\left(1 + \frac{L}{H}\right) \left(3 + \frac{2L}{H}\right)^2\right\}} }, \quad \text{где}\quad\widetilde{\varepsilon} = \frac{H}{2} \left(\frac{H}{3H + 2L}\right)^2 \|y_0 - y_*\|_2^2.
    \end{equation}
\end{corollary}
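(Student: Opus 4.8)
The plan is to reduce the argument-accuracy stopping rule \eqref{crit} to a function-value target that the inner coordinate method provably reaches, and then to extract the iteration count from the geometric rate of Theorem~\ref{cdm_conv}. Throughout I use the warm start $y_0 = \widetilde{x}_k$ (the natural choice when Listing~\ref{cdm} is invoked from Listing~\ref{am}), so that $\|\widetilde{x}_k - y_*\|_2 = \|y_0 - y_*\|_2$; here $y_*$ denotes the exact minimizer of the $H$-strongly convex subproblem $F$, and the role of $y_\star$ in \eqref{crit} will be played by $\mathbb{E}[y_N]$.

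First I would pass from expected function values to the expected iterate by a two-step Jensen argument. Since $t \mapsto t^2$ and the norm are convex, $\|\mathbb{E}[y_N] - y_*\|_2^2 = \|\mathbb{E}[y_N - y_*]\|_2^2 \le \bigl(\mathbb{E}\|y_N - y_*\|_2\bigr)^2 \le \mathbb{E}\|y_N - y_*\|_2^2$, so it suffices to control $\mathbb{E}\|y_N - y_*\|_2^2$. For that I invoke the $H$-strong convexity of $F$ together with $\nabla F(y_*) = 0$, which gives the lower bound $\tfrac{H}{2}\,\mathbb{E}\|y_N - y_*\|_2^2 \le \mathbb{E}[F(y_N)] - F(y_*)$. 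Reading this in reverse shows that forcing $\mathbb{E}[F(y_N)] - F(y_*) \le \widetilde{\varepsilon}$ with the stated $\widetilde{\varepsilon} = \tfrac{H}{2}\bigl(\tfrac{H}{3H+2L}\bigr)^2\|y_0 - y_*\|_2^2$ is exactly the amount of function-value accuracy that yields $\|\mathbb{E}[y_N] - y_*\|_2 \le \tfrac{H}{3H+2L}\|\widetilde{x}_k - y_*\|_2$, i.e. \eqref{crit}. This explains the precise form of $\widetilde{\varepsilon}$.

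Next I would upper-bound the expected gap. Theorem~\ref{cdm_conv} contracts it geometrically, $\mathbb{E}[F(y_N)] - F(y_*) \le (1 - H/Z)^N (F(y_0) - F(y_*))$, while the same $(L+H)$-smoothness of $F$ behind \eqref{smooth} (again with $\nabla F(y_*) = 0$) gives $F(y_0) - F(y_*) \le \tfrac{L+H}{2}\|y_0 - y_*\|_2^2$. Chaining the contraction, the smoothness bound, and the target $\widetilde{\varepsilon}$, then dividing through by $\tfrac12\|y_0 - y_*\|_2^2$, reduces the requirement to $(1 - H/Z)^N \le \bigl[(1 + L/H)(3 + 2L/H)^2\bigr]^{-1}$.

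It then remains to solve for $N$: using the elementary slackening $(1 - H/Z)^N \le e^{-NH/Z}$ and taking logarithms yields $N \ge \tfrac{Z}{H}\ln\{(1 + L/H)(3 + 2L/H)^2\}$, and rounding up gives the claimed $N(\widetilde{\varepsilon})$. I expect no serious obstacle here: the only points demanding care are the warm-start identification $y_0 = \widetilde{x}_k$ that aligns the distances appearing in \eqref{crit} with those controlled by the analysis, and the fact that two Jensen steps are needed to transfer the in-expectation function bound to the deterministic point $\mathbb{E}[y_N]$ required by the stopping test. Everything else is a direct composition of the strong-convexity lower bound, the smoothness upper bound, and the contraction of Theorem~\ref{cdm_conv}.
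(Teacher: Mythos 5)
Your proposal is correct and follows essentially the same route as the paper's proof: the $(H+L)$-smoothness upper bound on $F(y_0)-F(y_*)$, the geometric contraction of Theorem~\ref{cdm_conv} slackened via $1-H/Z\le e^{-H/Z}$, and a strong-convexity-plus-Jensen reduction of the argument criterion \eqref{crit} to the function-value target $\widetilde{\varepsilon}$. The only cosmetic differences are that you apply Jensen to the norm and take strong convexity in expectation, whereas the paper applies Jensen to $F$ at the point $\overline{y}_N=\mathbb{E}[y_N]$ (the two are equivalent), and that you usefully make explicit the warm start $y_0=\widetilde{x}_k$, which the paper leaves implicit.
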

\begin{proof}
    
    Ввиду $(H+L)$-липшицевой гладкости функции $F$ верно: $\displaystyle F(y_0) - F(y_*) \leq \frac{H+L}{2} \|y_0 - y_*\|_2^2$. Используя это неравенство вместе с оценкой \eqref{f_conv}, можем выписать условие достижения заданной точности $\widetilde{\varepsilon}$ по функции: $\displaystyle \frac{H+L}{2} \left(1 - \frac{1}{\kappa}\right)^{N} \|y_0 - y_*\|_2^2 \leq \widetilde{\varepsilon}$. Также верно $1 - 1/\kappa \leq \exp\{-1/\kappa\}$, и, значит: 
    $$\displaystyle \frac{H+L}{2} \exp\{\kappa / N\} \|y_0 - y_*\|_2^2 \leq \widetilde{\varepsilon}.$$ 
    Логарифмируя и подставляя выражение для $\kappa$, получаем выражение для числа итераций от $\widetilde{\varepsilon}$:
    
    \begin{equation} \label{itt}
        N(\widetilde{\varepsilon}) = \ceil[\Bigg]{\frac{Z}{H} \ln {\left\{\frac{(H+L) \|y_0 - y_*\|_2^2}{2 \widetilde{\varepsilon}}\right\}}}.
    \end{equation}
    Ввиду $H$-сильной выпуклости функции $F$ верно $\displaystyle \|\overline{y}_N - y_*\|_2^2 \leq \frac{2}{H} (F(\overline{y}_N) - F(y_*))$, где $\overline{y}_N = \mathbb{E}[y_N]$. Функция $F$ выпукла, следовательно, по неравенству Йенсена, $F(\overline{y}_N) \leq \mathbb{E}[F(y_N)]$, и отсюда, вместе с \eqref{crit} получаем достаточное условие достижения решения вспомогательной задачи:
    $$\mathbb{E}[F(y_N)] - F(y_*) \leq \frac{H}{2} \left(\frac{H}{3H + 2L}\right)^2 \|y_0 - y_*\|_2^2.$$
    Подставляя в формулу \eqref{itt} вместо $\widetilde{\varepsilon}$ выражение из правой части данного неравенства, непосредственно приходим к выражению из утверждения.
\end{proof}

Теперь, когда полностью прояснён вопрос о требуемой точности и оракульной сложности решения вспомогательной задачи с помощью покомпонентного метода, можно перейти к результатам о сходимости ускоренного метаалгоритма. Для используемого условия останова \eqref{crit} метода, решающего вспомогательную задачу, справедлив следующий результат о сходимости ускоренного метаалгоритма:

\begin{theorem}{(\cite{gasnikov2020accelerated}, теорема 1)} \label{am_conv}
    При $H > 0$, для последовательности $\{v_k\}_{k=1}^{\widetilde{N}}$, генерируемой ускоренным метаалгоритмом, использующим для решения вспомогательной задачи некоторый не стохастический метод, выполняется
    \begin{equation} \label{am_th}
        f(v_{\widetilde{N}}) - f(x_*) \leq \frac{48}{5} \frac{H \|x_0 - x_*\|_2^2}{\widetilde{N}^2}.
    \end{equation}
\end{theorem}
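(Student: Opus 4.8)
План состоит в том, чтобы провести стандартный для ускоренных проксимальных методов анализ через функцию Ляпунова (потенциал), аккуратно учитывая неточность решения вспомогательной задачи. Введём потенциал
$$\Psi_k = A_k \left(f(v_k) - f(x_*)\right) + \frac{1}{2}\|x_k - x_*\|_2^2,$$
и будем стремиться показать, что он не возрастает: $\Psi_{k+1} \leq \Psi_k$ при всех $k$. Отсюда, пользуясь $A_0 = 0$, $v_0 = x_0$, сразу получится $A_{\widetilde N}(f(v_{\widetilde N}) - f(x_*)) \leq \Psi_{\widetilde N} \leq \Psi_0 = \frac{1}{2}\|x_0 - x_*\|_2^2$, то есть $f(v_{\widetilde N}) - f(x_*) \leq \frac{\|x_0 - x_*\|_2^2}{2 A_{\widetilde N}}$, и всё сведётся к оценке снизу на $A_{\widetilde N}$.

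Первый шаг — вывести оценку убывания за одну итерацию. Используя выпуклость $f$, я бы линеаризовал разности $f(v_{k+1}) - f(x_*)$ и $f(v_{k+1}) - f(v_k)$ через $\nabla f(v_{k+1})$, затем объединил слагаемые, подставив определение $\widetilde x_k = (A_k v_k + a_{k+1} x_k)/A_{k+1}$, что даёт
$$A_{k+1}(f(v_{k+1}) - f(x_*)) - A_k(f(v_k) - f(x_*)) \leq A_{k+1}\langle \nabla f(v_{k+1}), v_{k+1} - \widetilde x_k\rangle + a_{k+1}\langle \nabla f(v_{k+1}), x_k - x_*\rangle.$$
Раскрывая $\frac{1}{2}\|x_{k+1} - x_*\|_2^2$ по правилу обновления $x_{k+1} = x_k - a_{k+1}\nabla f(v_{k+1})$, слагаемые с $\langle \nabla f(v_{k+1}), x_k - x_*\rangle$ сокращаются, и остаётся
$$\Psi_{k+1} - \Psi_k \leq A_{k+1}\langle \nabla f(v_{k+1}), v_{k+1} - \widetilde x_k\rangle + \frac{a_{k+1}^2}{2}\|\nabla f(v_{k+1})\|_2^2.$$

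Далее — ключевой шаг: использовать тождество $a_{k+1}^2 = \lambda A_{k+1}$ (непосредственно следующее из формулы для $a_{k+1}$) с $\lambda = 1/(2H)$ вместе с критерием останова. Обозначив $\delta_k = v_{k+1} - \widetilde x_k$ и невязку $r_{k+1} = \nabla f(v_{k+1}) + H\delta_k = \nabla F(v_{k+1})$, из \eqref{monteiro} имеем $\|r_{k+1}\|_2 \leq \frac{H}{2}\|\delta_k\|_2$. Подставив $\nabla f(v_{k+1}) = r_{k+1} - H\delta_k$ и раскрыв $\|\nabla f(v_{k+1})\|_2^2$ \emph{точно} (не огрубляя неравенством треугольника, иначе необходимый запас теряется), получится выражение вида $A_{k+1}\big[\tfrac12\langle r_{k+1}, \delta_k\rangle - \tfrac34 H\|\delta_k\|_2^2 + \tfrac{1}{4H}\|r_{k+1}\|_2^2\big]$; оценивая здесь $\langle r_{k+1},\delta_k\rangle$ и $\|r_{k+1}\|_2^2$ через критерий останова, все слагаемые собираются в заведомо неположительную величину. Именно двукратный зазор между параметром регуляризации $H$ и параметром ускорения $\lambda = 1/(2H)$ даёт тот запас, который компенсирует относительную неточность $\sigma = 1/2$ решения вспомогательной задачи; это и есть главная техническая трудность — провести оценку так, чтобы квадратичные по невязке члены не разрушили монотонность потенциала.

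Остаётся оценить снизу $A_{\widetilde N}$. Из $a_{k+1}^2 = \lambda A_{k+1}$ и $a_{k+1} = A_{k+1} - A_k$ выведу $\sqrt{A_{k+1}} - \sqrt{A_k} = \frac{a_{k+1}}{\sqrt{A_{k+1}} + \sqrt{A_k}} \geq \frac{\sqrt{\lambda}}{2}$, откуда $A_{\widetilde N} \geq \frac{\lambda \widetilde N^2}{4} = \frac{\widetilde N^2}{8H}$. Совмещая это с телескопированием потенциала, получаю $f(v_{\widetilde N}) - f(x_*) \leq \frac{\|x_0 - x_*\|_2^2}{2 A_{\widetilde N}} \leq \frac{4 H \|x_0 - x_*\|_2^2}{\widetilde N^2}$. Это уже даёт оценку нужного вида; коэффициент $48/5$ из формулировки возникает при более грубом (но зато единообразном для произвольного порядка $p$) учёте неточности в исходной работе \cite{gasnikov2020accelerated}, специализацией которой при $p = 1$ и является данная теорема, и заведомо мажорирует аккуратную константу $4$.
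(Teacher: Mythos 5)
You should note first that the paper contains no proof of this statement at all: Theorem \ref{am_conv} is imported by citation (as the $p=1$, $g\equiv 0$ specialization of Theorem 1 in \cite{gasnikov2020accelerated}), so your attempt can only be compared with the proof in that source, which runs a Monteiro--Svaiter-type analysis for arbitrary tensor order $p$. Your specialization to $p=1$ is correct and complete. Checking the one step you compress: with $\delta_k = v_{k+1}-\widetilde{x}_k$, $r_{k+1}=\nabla F(v_{k+1})$, $a_{k+1}^2=\lambda A_{k+1}$, $\lambda = \nicefrac{1}{2H}$, and the stopping rule \eqref{monteiro} in the form $\|r_{k+1}\|_2\le \frac{H}{2}\|\delta_k\|_2$, your exact expansion of $\langle\nabla f(v_{k+1}),\delta_k\rangle + \frac{1}{4H}\|\nabla f(v_{k+1})\|_2^2$ with $\nabla f(v_{k+1})=r_{k+1}-H\delta_k$ indeed gives the bracket you claim, and it is dominated by
\[
\frac12\langle r_{k+1},\delta_k\rangle-\frac{3H}{4}\|\delta_k\|_2^2+\frac{1}{4H}\|r_{k+1}\|_2^2
\le\left(\frac{H}{4}-\frac{3H}{4}+\frac{H}{16}\right)\|\delta_k\|_2^2=-\frac{7H}{16}\|\delta_k\|_2^2\le 0,
\]
so the potential $\Psi_k$ is monotone; together with $\sqrt{A_{k+1}}-\sqrt{A_k}\ge\frac{\sqrt{\lambda}}{2}$, hence $A_{\widetilde{N}}\ge \frac{\widetilde{N}^2}{8H}$, this yields $f(v_{\widetilde{N}})-f(x_*)\le \frac{4H\|x_0-x_*\|_2^2}{\widetilde{N}^2}$, which majorizes the stated bound since $4<\frac{48}{5}$. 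Your side remark is also substantively right: if one instead bounds $\|\nabla f(v_{k+1})\|_2\le\|r_{k+1}\|_2+H\|\delta_k\|_2\le\frac{3H}{2}\|\delta_k\|_2$ by the triangle inequality, the bracket is only bounded by $+\frac{H}{16}\|\delta_k\|_2^2$ and monotonicity is lost, so the exact expansion is genuinely needed at the relative accuracy $\sigma=\nicefrac12$ fixed by $\lambda=\nicefrac{1}{2H}$. What your route buys over the cited general-$p$ argument is a self-contained first-order proof with the sharper constant $4$ in place of $\frac{48}{5}$; what it gives up is generality (higher-order proximal steps), which is exactly where the looser uniform constant of \cite{gasnikov2020accelerated} comes from. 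Two small caveats worth stating explicitly if you write this up: the claim holds for a non-stochastic inner solver precisely because condition \eqref{monteiro} must hold deterministically at every outer iteration (this is why the paper needs the separate Theorem \ref{am_stoch_conv} for the coordinate method), and your attribution of the constant $\frac{48}{5}$ to the general-$p$ bookkeeping is plausible but is an assertion about the cited proof, not something your argument establishes.
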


\noindentНа основании последнего утверждения можно сформулировать теорему о сходимости ускоренного метаалгоритма в случае применения стохастического метода и, в частности, покомпонентного градиентного спуска.

\begin{theorem} \label{am_stoch_conv}
    При $H > 0$, для некоторого $0 < \delta < 1$, точка $v_{\widetilde{N}}$, являющаяся результатом работы ускоренного метаалгоритма, использующего для решения вспомогательной задачи покомпонентный метод, решающий вспомогательную задачу $N_{\delta}$ итераций, удовлетворяет условию
    \begin{equation*}
        Pr(f(v_{\widetilde{N}}) - f(x_*) < \varepsilon) \geq 1 - \delta
    \end{equation*}
    в случае, если 
    \begin{equation} \label{iters_out_inn}
        \widetilde{N} \geq \ceil[\Bigg]{\frac{4 \sqrt{15}}{5} \sqrt{\frac{H \|x_0 - x_*\|_2^2}{\varepsilon}}}, \quad N_{\delta} \geq N\left(\frac{\widetilde{\varepsilon}\delta}{\widetilde{N}}\right) = \ceil[\Bigg]{\frac{Z}{H} \ln { \left\{\frac{\widetilde{N}}{\delta} \left(1 + \frac{L}{H}\right) \left(3 + \frac{2L}{H}\right)^2\right\}} }.
    \end{equation}
\end{theorem}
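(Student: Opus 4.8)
The plan is to combine the deterministic convergence guarantee of Theorem \ref{am_conv} with a high-probability control of the inner coordinate-descent solver, obtained through Markov's inequality and a union bound over the $\widetilde{N}$ outer iterations. First I would fix the number of outer iterations. Theorem \ref{am_conv} is established only under the hypothesis that every subproblem is solved to the stop criterion \eqref{crit}, and it gives $f(v_{\widetilde{N}}) - f(x_*) \leq \frac{48}{5}\frac{H\|x_0 - x_*\|_2^2}{\widetilde{N}^2}$. Requiring the right-hand side to be at most $\varepsilon$ and solving for $\widetilde{N}$ yields $\widetilde{N} \geq \sqrt{48/5}\,\sqrt{H\|x_0-x_*\|_2^2/\varepsilon} = \frac{4\sqrt{15}}{5}\sqrt{H\|x_0-x_*\|_2^2/\varepsilon}$, which is exactly the first bound in \eqref{iters_out_inn}. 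This portion is purely deterministic and holds on any sample path on which all subproblems meet \eqref{crit}.

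The heart of the argument is to guarantee, with high probability, that the actual (random) iterate $y_{N_\delta}$ produced by the coordinate method at each outer step satisfies \eqref{crit} — not merely its expectation, as in Corollary \ref{cor_num}. I would first transfer the criterion to a function-value requirement pathwise: by $H$-strong convexity, $\|y_{N_\delta} - y_*\|_2^2 \leq \frac{2}{H}(F(y_{N_\delta}) - F(y_*))$ holds for the random point itself, so the event $\{F(y_{N_\delta}) - F(y_*) \leq \widetilde{\varepsilon}\}$ with $\widetilde{\varepsilon} = \frac{H}{2}\left(\frac{H}{3H+2L}\right)^2\|\widetilde{x}_k - y_*\|_2^2$ already implies \eqref{crit}. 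Applying Markov's inequality to the nonnegative variable $F(y_{N_\delta}) - F(y_*)$ and bounding its expectation through Theorem \ref{cdm_conv} and the derivation \eqref{itt} of Corollary \ref{cor_num}, I would demand the expected residual to be at most $\widetilde{\varepsilon}\delta/\widetilde{N}$, so that each subproblem violates \eqref{crit} with conditional probability at most $\delta/\widetilde{N}$. Substituting the sharpened target $\widetilde{\varepsilon}\delta/\widetilde{N}$ into \eqref{itt} and simplifying reproduces precisely the second bound $N_\delta \geq N(\widetilde{\varepsilon}\delta/\widetilde{N}) = \ceil{\frac{Z}{H}\ln\left\{\frac{\widetilde{N}}{\delta}\left(1 + \frac{L}{H}\right)\left(3 + \frac{2L}{H}\right)^2\right\}}$.

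Finally I would assemble the pieces by a union bound. Because the stop criterion \eqref{crit} is relative — its target $\widetilde{\varepsilon}$ scales with $\|\widetilde{x}_k - y_*\|_2^2$ — the per-subproblem failure probability $\delta/\widetilde{N}$ is uniform over all histories; conditioning on the past and summing, the probability that at least one of the $\widetilde{N}$ subproblems fails \eqref{crit} is at most $\widetilde{N}\cdot\delta/\widetilde{N} = \delta$. On the complementary event every subproblem meets \eqref{crit}, so the deterministic conclusion of Theorem \ref{am_conv} applies pathwise and gives $f(v_{\widetilde{N}}) - f(x_*) \leq \varepsilon$ by the choice of $\widetilde{N}$; therefore $Pr(f(v_{\widetilde{N}}) - f(x_*) < \varepsilon) \geq 1 - \delta$.

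The main obstacle I anticipate is the passage from the in-expectation guarantee of the coordinate method to a high-probability guarantee for the actual iterate fed into the metaalgorithm, while simultaneously controlling all $\widetilde{N}$ subproblems. The delicate point is that the Markov bound must be applied conditionally on the random outer state $\widetilde{x}_k$, and it is exactly the scale invariance of the relative criterion \eqref{crit} that renders the required inner iteration count $N_\delta$ independent of $\widetilde{x}_k$, so that the conditional failure probabilities are uniformly bounded and the union bound closes.
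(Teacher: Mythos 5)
Your proposal is correct and follows essentially the same route as the paper's own proof: derive $\widetilde{N}$ from Theorem \ref{am_conv}, apply Markov's inequality to the function-value residual of each subproblem with the sharpened target $\widetilde{\varepsilon}\delta/\widetilde{N}$ inserted into \eqref{itt}, and close with a union bound over the $\widetilde{N}$ outer iterations. If anything, you are more explicit than the paper on two points it leaves implicit --- applying $H$-strong convexity pathwise to the random iterate (rather than to $\mathbb{E}[y_N]$ as in Corollary \ref{cor_num}) so that the function-value event implies \eqref{crit}, and noting that the relative form of \eqref{crit} makes the conditional failure probabilities uniform over histories.
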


\begin{proof}
    В следствии \ref{cor_num} представлена оценка числа итераций, достаточного для выполнения следующего условия на матожидание значения функционала в результирующей точке метода:
    $$\mathbb{E}[F(y_{N(\widetilde{\varepsilon})})] - F(y_*) \leq \widetilde{\varepsilon}.$$
    Воспользуемся неравенством Маркова и получим формулировку данного условия в терминах оценки вероятности больших отклонений \cite{anikin2015modern}: заведомо выберем допустимое значение вероятности невыполнения поставленного условия, так чтобы $0 < \delta/\widetilde{N} < 1$, где $\widetilde{N}$ выражается из \eqref{am_th}; тогда
    $$Pr\left(F\left(y_{N(\widetilde{\varepsilon} \delta/\widetilde{N})}\right) - F(y_*) \geq \widetilde{\varepsilon}\right) \leq \frac{\delta}{\widetilde{N}} \cdot \frac{\mathbb{E}\left[F\left(y_{N(\widetilde{\varepsilon} \delta/\widetilde{N})}\right)\right] - F(y_*)}{\widetilde{\varepsilon} \cdot \delta/\widetilde{N}} = \frac{\delta}{\widetilde{N}}.$$
    Поскольку вероятность того, что полученное решение некоторой отдельно взятой вспомогательной задачи не будет удовлетворять поставленному условию, равна $\delta / \widetilde{N}$, значит вероятность того, что за $\widetilde{N}$ итераций ускоренного метаалгоритма условие будет невыполнено хотя бы для одной из задач, есть $\widetilde{N} \cdot \delta / \widetilde{N} = \delta$, откуда и следует доказываемое утверждение.
\end{proof}

Далее, объединяя оценки, приводимые в теореме \ref{am_stoch_conv}, можем получить асимптотическую оценку на общее число итераций покомпонентного метода, достаточное для решения рассматриваемой оптимизационной задачи с некоторой заданной точностью, а также оценку оптимального параметра $H$:

\begin{corollary}
    Для того чтобы точка $v_{\widetilde{N}}$, являющаяся результатом работы ускоренного метаалгоритма, удовлетворяла условию 
    $$Pr(f(v_{\widetilde{N}}) - f(x_*) < \varepsilon) \geq 1 - \delta,$$
    достаточно выполнить в сумме
    \begin{equation} \label{total_iters}
        \hat{N} \geq \widetilde{N} \cdot N_\delta = \mathcal{O}\left(\frac{Z \|x_0 - x_*\|_2}{\sqrt{H}} \cdot \frac{1}{\varepsilon^{1/2}} \log\left\{\frac{1}{\varepsilon^{1/2} \delta}\right\}\right)
    \end{equation}
    итераций покомпонентного метода для решения вспомогательной задачи. При этом, оптимально значение параметра $H$ регуляризации вспомогательной задачи следует выбирать как $H \simeq \frac{1}{n} \sum_{i=1}^n L_i$ ($\simeq$ обозначает равенство с точностью до малого множителя порядка $\log$).
\end{corollary}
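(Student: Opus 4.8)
План доказательства состоит в том, чтобы непосредственно перемножить две оценки на число итераций, доставляемые теоремой \ref{am_stoch_conv}, и затем оптимизировать полученное произведение по параметру регуляризации $H$. Сначала я бы выписал оценку на число внешних итераций $\widetilde{N} = \mathcal{O}\left(\sqrt{H/\varepsilon}\,\|x_0 - x_*\|_2\right)$ и оценку на число итераций покомпонентного метода для одной вспомогательной задачи $N_\delta = \mathcal{O}\left(\frac{Z}{H}\log\{\ldots\}\right)$. Поскольку на каждой из $\widetilde{N}$ внешних итераций вспомогательная задача решается заново, суммарное число итераций есть их произведение; при сокращении степеней $H$ множитель $\sqrt{H}/H = 1/\sqrt{H}$ даёт $\hat{N} = \widetilde{N}\cdot N_\delta = \mathcal{O}\left(\frac{Z\,\|x_0 - x_*\|_2}{\sqrt{H}\,\varepsilon^{1/2}}\log\{\ldots\}\right)$.

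Далее я бы занялся упрощением логарифмического множителя. Под знаком логарифма в \eqref{iters_out_inn} стоит выражение $\frac{\widetilde{N}}{\delta}\left(1 + \frac{L}{H}\right)\left(3 + \frac{2L}{H}\right)^2$; подставляя асимптотику $\widetilde{N} \propto 1/\varepsilon^{1/2}$, я бы показал, что ведущий вклад в логарифм даёт отношение $1/(\varepsilon^{1/2}\delta)$, тогда как оставшиеся множители $\sqrt{H}\,\|x_0 - x_*\|_2$ и $\left(1 + L/H\right)\left(3 + 2L/H\right)^2$ после логарифмирования превращаются в аддитивные поправки порядка $\log$ и потому поглощаются $\mathcal{O}$-обозначением. Это и приводит к заявленной форме $\log\{1/(\varepsilon^{1/2}\delta)\}$.

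Наконец, для нахождения оптимального $H$ я бы выделил зависимость $\hat{N}$ от $H$, сосредоточенную в множителе $Z/\sqrt{H}$. Подставляя $Z = \sum_{i=1}^n (H + L_i) = n(H + \overline{L})$, где $\overline{L} = \frac{1}{n}\sum_{i=1}^n L_i$, получаем $\frac{Z}{\sqrt{H}} = n\left(\sqrt{H} + \frac{\overline{L}}{\sqrt{H}}\right)$. Приравнивание производной по $H$ к нулю немедленно даёт $H = \overline{L}$, причём в этой точке $Z/\sqrt{H} = 2n\sqrt{\overline{L}}$, так что итоговая сложность принимает вид $\mathcal{O}\left(n\sqrt{\overline{L}\,\|x_0 - x_*\|_2^2/\varepsilon}\,\log\{\ldots\}\right)$, согласующийся с оптимальной оракульной оценкой из введения.

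Главную трудность я ожидаю именно в корректной обработке логарифмического множителя: параметр $H$ входит не только в полиномиальную часть $Z/\sqrt{H}$, но и под знак логарифма (через отношение $L/H$ и через $\widetilde{N}$), поэтому точная минимизация по $H$ дала бы значение, отличающееся от $\overline{L}$ на множитель порядка $\log$. Именно по этой причине в формулировке оптимальность $H \simeq \overline{L}$ понимается лишь с точностью до такого логарифмического множителя, и строгое обоснование требует аккуратного разделения ведущего и подлогарифмического вкладов.
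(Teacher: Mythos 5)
Ваше доказательство верно и по существу повторяет доказательство из статьи: та же прямая подстановка одной оценки из \eqref{iters_out_inn} в другую с последующим перемножением, и та же минимизация по $H$ множителя $Z/\sqrt{H} = n\left(\sqrt{H} + \overline{L}/\sqrt{H}\right)$ (в статье записанного эквивалентно как $\sqrt{H}\left(1 + \overline{L}/H\right)$), дающая $H = \overline{L}$. Ваши замечания об обработке логарифмического множителя и о том, что вхождение $H$ под знак логарифма делает оптимальность лишь приближённой, корректны и лишь подробнее проговаривают то, что в статье свёрнуто во фразу об исключении множителя порядка $\log(L/H)$ и в оговорку $H \simeq \overline{L}$.
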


\begin{proof}
    Выражение для $\hat{N}$ можно получить путём прямой подстановки одной из оценок, приводимых в \eqref{iters_out_inn}, в другую, и их последующего умножения. Если исключить из рассмотрения малый множитель порядка $\log(L / H)$, константа в оценке будет зависеть от $H$ как:
    $$\sqrt{H} \cdot \frac{Z/n}{H} = \sqrt{H} \left(1 + \frac{\frac{1}{n} \sum_{i=1}^n L_i}{H}\right).$$
    Минимизируя представленное выражение по $H$, получаем указанный результат.
\end{proof}

Рассмотрим теперь подробнее вопрос об алгоритмической сложности предложенного ускоренного метода покомпонентного градиентного спуска. Очевидным является следующее утверждение:

\begin{proposition} \label{constr}
    Алгоритмическая сложность рассматриваемого метода составляет
    $$T = \mathcal{O}\left(\widetilde{N} (T_{out} + N_{\delta} T_{inn})\right),$$
    где $T_{out}$~--- амортизированная оценка сложности вычислений, производимых на итерации ускоренного метаалгоритма, $T_{inn}$~--- амортизированная оценка сложности итерации покомпонентного метода.
\end{proposition}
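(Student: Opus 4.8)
Утверждение получается прямым подсчётом суммарной работы по структуре двух вложенных циклов, поэтому я построил бы доказательство как декомпозицию этой структуры. Сначала я бы зафиксировал, что внешний цикл~--- ускоренный метаалгоритм (листинг~\ref{am})~--- выполняет ровно $\widetilde{N}$ итераций, а на каждой своей итерации, помимо решения вспомогательной задачи, производит лишь ``накладные'' операции: вычисление скаляров $a_{k+1}$, $A_{k+1}$, выпуклой комбинации $\widetilde{x}_k$ и шаг $x_{k+1} \leftarrow x_k - a_{k+1}\nabla f(v_{k+1})$ с полным градиентом. Суммарную (амортизированную) стоимость этих операций я бы обозначил через $T_{out}$.

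Далее я бы учёл, что единственной дорогой частью внешней итерации является вызов покомпонентного метода (листинг~\ref{cdm}) для приближённого решения регуляризованной подзадачи. По теореме~\ref{am_stoch_conv} для достижения требуемой точности по аргументу (с заданной вероятностью) достаточно выполнить $N_\delta$ итераций внутреннего метода, причём эта оценка едина для всех $\widetilde{N}$ подзадач: она не зависит от внешнего индекса $k$, а определяется лишь величинами $\widetilde{N}$ и $\delta$. Обозначив (амортизированную) стоимость одной итерации покомпонентного метода через $T_{inn}$, я получаю стоимость решения одной подзадачи $N_\delta \cdot T_{inn}$, а значит, полную стоимость одной внешней итерации $T_{out} + N_\delta T_{inn}$. Суммируя по всем $\widetilde{N}$ итерациям внешнего цикла, прихожу к
$$T = \mathcal{O}\big(\widetilde{N}(T_{out} + N_\delta T_{inn})\big).$$

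Основная (и, по сути, единственная нетривиальная) тонкость, которую я бы проговорил,~--- это корректность использования именно \emph{амортизированных} оценок. Стоимость отдельных итераций не одинакова: в эффективной реализации покомпонентного метода большинство шагов обходятся дёшево за счёт кэширования вспомогательных величин (например, знаменателя в SoftMax), но изредка требуется их пересчёт ``с нуля'', а во внешнем цикле один раз за итерацию вычисляется полный градиент $\nabla f(v_{k+1})$. Поэтому $T_{out}$ и $T_{inn}$ я определил бы как усреднённые по соответствующему циклу стоимости шага и сослался бы на стандартный факт амортизационного анализа: сумма амортизированных стоимостей по определению мажорирует истинную суммарную работу. Тогда перемножение числа итераций на амортизированные стоимости шагов даёт корректную верхнюю оценку $T$, что и завершает рассуждение.
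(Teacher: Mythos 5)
Ваше рассуждение корректно и совпадает с тем, что имеет в виду статья: в ней это утверждение вообще не доказывается (оно предваряется словами <<очевидным является следующее утверждение>>), а подразумевается именно ваш прямой подсчёт по структуре двух вложенных циклов~--- $\widetilde{N}$ внешних итераций, каждая из которых стоит $T_{out}$ плюс $N_\delta$ внутренних итераций по $T_{inn}$. Ваше дополнительное пояснение про корректность амортизированных оценок уместно и фактически предвосхищает рассуждение, которое статья проводит уже в доказательстве следующей теоремы о конкретной оценке $T_{iter} = \mathcal{O}(s)$.
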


\noindentОттолкнувшись от него, сформулируем результат о вычислительной сложности метода:

\begin{theorem}
    Пусть сложность вычисления одной компоненты градиентна $f$ составляет $\mathcal{O}(s)$. Тогда алгоритмическая сложность рассматриваемого метода есть
    $$T = \mathcal{O}\left(sn \cdot \sqrt{\frac{\overline{L} \|x_0 - x_*\|_2^2}{\varepsilon}} \log\left\{\frac{1}{\varepsilon^{1/2} \delta}\right\}\right),\quad\text{где}\quad \overline{L} = Z / n = \frac{1}{n} \sum_{i=1}^n L_i.$$
\end{theorem}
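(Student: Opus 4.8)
План доказательства сводится к явному оцениванию величин, входящих в утверждение \ref{constr}: согласно ему $T = \mathcal{O}(\widetilde{N}(T_{out} + N_\delta T_{inn}))$, так что достаточно оценить амортизированные сложности $T_{inn}$ и $T_{out}$ отдельной итерации внутреннего (покомпонентного) и внешнего (метаалгоритма) методов, после чего подставить оценки на $\widetilde{N}$ и $N_\delta$ из \eqref{iters_out_inn} при оптимальном выборе $H \simeq \overline{L}$.

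Сначала оценю $T_{inn}$. Одна итерация покомпонентного метода (листинг \ref{cdm}) требует вычисления одной компоненты $\nabla_i F(y_k) = \nabla_i f(y_k) + H(y_k^i - \widetilde{x}_k^i)$: слагаемое от $f$ стоит $\mathcal{O}(s)$ по условию теоремы, а квадратичная добавка~--- $\mathcal{O}(1)$; обновление затрагивает лишь одну координату $y$, поэтому, храня вектор $y$ и меняя его на месте (амортизируя формальное копирование $y_{k+1} \leftarrow y_k$), получаю $T_{inn} = \mathcal{O}(s)$. Для оценки $T_{out}$ замечу, что на внешней итерации (листинг \ref{am}) формируется $\widetilde{x}_k$~--- выпуклая комбинация двух $n$-мерных векторов стоимостью $\mathcal{O}(n)$,~--- и выполняется шаг $x_{k+1} = x_k - a_{k+1}\nabla f(v_{k+1})$, для которого необходим полный градиент $\nabla f(v_{k+1})$; вычисляя все $n$ его компонент по $\mathcal{O}(s)$ каждая, получаю $T_{out} = \mathcal{O}(ns)$.

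Ключевое наблюдение состоит в том, что разовая стоимость внешних вычислений не доминирует: поскольку $N_\delta T_{inn} = \mathcal{O}(ns\log\{\cdots\})$, а логарифмический множитель не меньше единицы, слагаемое $T_{out} = \mathcal{O}(ns)$ поглощается, и $T_{out} + N_\delta T_{inn} = \mathcal{O}(N_\delta T_{inn})$. Остаётся подставить оценки: при $H \simeq \overline{L} = Z/n$ имеем $Z/H = n$, откуда $N_\delta = \mathcal{O}(n\log\{\tfrac{1}{\varepsilon^{1/2}\delta}\})$, и, с учётом $\widetilde{N} = \mathcal{O}(\sqrt{H/\varepsilon}\,\|x_0 - x_*\|_2)$,
$$T = \mathcal{O}(\widetilde{N} N_\delta T_{inn}) = \mathcal{O}\left(sn\sqrt{\frac{\overline{L}\|x_0 - x_*\|_2^2}{\varepsilon}}\log\left\{\frac{1}{\varepsilon^{1/2}\delta}\right\}\right);$$
эквивалентно, можно сразу умножить выражение для $\hat{N}$ из \eqref{total_iters} на $T_{inn} = \mathcal{O}(s)$, воспользовавшись тождеством $Z/\sqrt{H} = n\sqrt{\overline{L}}$ при $H \simeq \overline{L}$. Главным местом, требующим аккуратности, является именно амортизация~--- обоснование того, что обновление на месте даёт $T_{inn} = \mathcal{O}(s)$ вместо $\mathcal{O}(n)$, и что разовая стоимость полного градиента $\mathcal{O}(ns)$ на внешней итерации не ухудшает итоговую оценку.
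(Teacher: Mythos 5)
Ваше доказательство корректно и по существу совпадает с авторским: та же декомпозиция из утверждения \ref{constr}, та же оценка $T_{inn} = \mathcal{O}(s)$ за счёт обновлений ``на месте`` и то же ключевое наблюдение, что стоимость $\mathcal{O}(sn)$ полного градиента на внешней итерации покрывается $N_\delta = \widetilde{\mathcal{O}}(n)$ внутренними итерациями по $\mathcal{O}(s)$ каждая. Различие лишь в оформлении: в статье это наблюдение подаётся как амортизированная оценка $T_{iter} = \mathcal{O}(s)$ элементарной итерации с последующим умножением на $\hat{N}$ из \eqref{total_iters}, тогда как вы напрямую показываете поглощение слагаемого $T_{out}$ членом $N_\delta T_{inn}$~--- это эквивалентные формы одного и того же рассуждения.
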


\begin{proof}
    Переформулируем оценку из утверждения \ref{constr} следующим образом:
    $$T = \mathcal{O}\left(\hat{N} \cdot T_{iter}\right),$$
    где $T_{iter}$~--- амортизированная оценка сложности элементарной итерации ускоренного метаалгоритма, то есть итерации, которая может быть и внутренней итерацией покомпонентного метода, и основной итерацией метаалгоритма. Сложность основной итерации метаалгоритма определяется в первую очередь вычислением полного градиента $f$, а сложность этой процедуры (из условия теоремы) есть $\mathcal{O}(sn)$. В то же время, ввиду $Z = n \overline{L}$, верно также $N_\delta = \widetilde{\mathcal{O}}(n)$, где символ $\widetilde{\mathcal{O}}(\cdot)$ означает то же, что и $\mathcal{O}(\cdot)$, но с возможным присутствием множителей порядка $\log(\cdot)$. Поскольку основная итерация метаалгоритма исполняется через каждые $N_\delta$ элементарных итераций, где $N_\delta$ постоянно, из этого с помощью любого из методов амортизационного анализа тривиально получается амортизированная оценка сложности основной итерации метаалгоритма, составляющая $\mathcal{O}(s)$. Сложность итерации покомпонентного метода (если вместо копирования значений точки выполнять операции ``на месте``, что для данной конструкции вполне допустимо) определяется вычислением одной компоненты градиента, и составляет также $\mathcal{O}(s)$. Отсюда получаем $T_{iter} = \mathcal{O}(s)$. Используя оценку \eqref{total_iters} и подставляя оптимальное значение $H$, получаем приведённую сложность метода. 
    
    Заметим также, что сложность метода по памяти при этом составляет $\mathcal{O}(n)$, также как и сложность предварительных вычислений (для покомпонентного метода нет необходимости выполнять их каждый раз заново). 
\end{proof}

Сравним полученные для предложенного метода оценки с оценками других методов, которые могут быть использованы для решения задач в описываемой постановке: быстрого градиентного метода (FGM), классического покомпонентного спуска (CDM), ускоренного покомпонентного спуска в варианте Ю.Е.~Нестерова (ACDM) и предложенного в данной работе подхода (Catalyst CDM). Оценки приведены в таблице \ref{table_compare}. Как можно видеть из приведённых асимптотических оценок вычислительной сложности, предложенный метод позволяет достигать эффективной в отношении характера зависимости от размерности задачи $n$ и требуемой точности $\varepsilon$ скорости сходимости, не уступающей другим методам, при некоторой плате за это в виде логарифмического множителя в оценке. 

\begin{table}[ht]
\centering
\begin{tabular}{|c|c|c|c|}
\hline
Метод        & Ит. сложность                 & Выч. сложность       &  Источник \\ \hline
FGM          & $ \mathcal{O}\left(sn\right)$ & $\displaystyle \mathcal{O}\left(sn \cdot \frac{1}{\varepsilon^{1/2}} \cdot \sqrt{L}\right)$ & \cite{nesterov2018lectures} \\ \hline
CDM          & $ \mathcal{O}\left(s\right)$  & $\displaystyle \mathcal{O}\left(sn \cdot \frac{1}{\varepsilon} \cdot \overline{L}\right)$ & \cite{bubeck2014convex} \\ \hline
ACDM         & $ \mathcal{O}\left(n\right)$  & $\displaystyle \mathcal{O}\left(n^2 \cdot \frac{1}{\varepsilon^{1/2}} \cdot \sqrt{\widetilde{L}}\right)$ & \cite{nesterov2017efficiency} \\ \hline
Catalyst CDM & $ \mathcal{O}\left(s\right)$  & $\displaystyle \widetilde{\mathcal{O}}\left(sn \cdot \frac{1}{\varepsilon^{1/2}} \cdot \sqrt{\overline{L}}\right)$ & данная работа \\ \hline
\end{tabular}
\caption{Сравнение эффективности методов}
\label{table_compare}
\end{table}
Заметим, кроме того, что несмотря на существенное сходство оценок, между двумя наиболее эффективными методами в таблице (FGM и Catalyst CDM) существует также различие в константах, характеризующих гладкость функции: $L$~--- в FGM и $\overline{L}$~--- в Catalyst CDM, тем самым поведение рассматриваемого метода для различных задач напрямую зависит от характера их покомпонентной гладкости. В общем случае, нельзя утверждать, что одна из констант асимптотически существенно выгоднее другой, однако в ряде частных случаев возможно выписать оценки значений констант явно, и часто оказывается, что $\overline{L}$ ``меньше`` $L$. Наиболее существенен выигрыш в том случае, если справедливы соотношения $L=\mathcal{O}(n)$, $\overline{L} = \mathcal{O}(1)$~--- тогда в оценке вычислительной сложности предлагаемого метода удаётся редуцировать фактор порядка $\mathcal{O}(\sqrt{n})$ по сравнению с быстрым градиентным методом. В следующем разделе будет рассмотрен пример постановки задачи, в которой данный случай имеет место. 

\section{Численные эксперименты} \label{section2}

В данном разделе описывается характер практического поведения метода, на примере следующей оптимизационной задачи для функционала, имеющего вид SoftMax-функции:
\begin{equation}\label{eq:softmax}
\min\limits_{x \in \mathbb{R}^n}~~ \{f(x) = \gamma \ln \left( \sum_{j=1}^m \exp\left(\frac{\left[A x\right]_j}{\gamma}\right) \right) -\langle b,x \rangle\},
\end{equation}
где $b \in \mathbb{R}^n$, $A \in \mathbb{R}^{m \times n}$. Задачи такого вида существенно важны для многих приложений, в частности, они возникают в задачах энтропийно-линейного программирования в качестве двойственной задачи \cite{chernov2016method, gasnikov2016effective}, в том числе в задаче оптимального транспорта, а также исполняют роль сглаженной аппроксимации функции $\max$ (что и дало функционалу название SoftMax) и, соответственно, нормы $\|\cdot\|_{\infty}$, что может быть востребовано в некоторых постановках задачи PageRank или при решении СЛУ. Притом во всех описанных задачах важным частным случаем являются разреженные постановки, в случае которых матрица $A$ разрежена, то есть такова, что среднее число ненулевых элементов в строке $A_j$ не превосходит некоторого $s \ll n$ (будет удобно также предполагать возможность для одной из строк $A_j$ являться полностью неразреженной).

Сформулируем свойства, которыми обладает функция $f$\; \cite{gasnikov2018modern}:\mynobreakpar\vspace{0.2cm}
\begin{enumerate}
    \item $f$ дифференцируема;\mynobreakpar
    \item $\nabla f$ удовлетворяет условию Липшица с константой $L = \max_{j=1,...,m} \|A_j\|_2^2$;\mynobreakpar
    \item $\nabla_i f$ удовлетворяют покомпонентному условию Липшица с константами $L_i = \max_{j=1,...,m} |A_{j i}|$.
\end{enumerate}

Начнём уточнение свойств с первого пункта. Выпишем выражение для $i$-той компоненты градиента данного функционала:
$$\nabla_i f(x) = \frac{\sum_{j=1}^m{A_{j i} \exp\left(\left[A x\right]_j\right)}}{\sum_{j=1}^m{\exp\left(\left[A x\right]_j\right)}}.$$
Как можно видеть, наивное вычисление этого выражения может занимать время, сравнимое с вычислением градиента в целом, что будет значительно влиять на вычислительную сложность, а значит и на время работы метода. Однако в то же время, многие члены в этом выражении могут перевычисляться либо редко, либо покомпонентно, и использоваться при совершении шага метода как члены его дополнительной последовательности, так что сложность итерации будет оставаться эффективной, и применение покомпонентного метода будет оправдано. Для удобства описания используемых вычислительных приёмов, запишем шаг покомпонентного метода в виде:
$$y_{k+1} = y_{k} + \delta e_i,$$
где $\delta$~--- размер шага, умноженный на соответствующую компоненту градиента, $e_i$~--- $i$-тый орт базиса. 

\begin{enumerate}
    \item Будем хранить набор значений $\left\{\exp\left(\left[A y_k\right]_j\right)\right\}_{j=1}^m$, использующихся для вычисления суммы в числителе. Обновление этих значений после осуществления шага метода имеет сложность $\mathcal{O}(s)$, ввиду того что $A y_{k+1} = A y_k + \delta A_i$, и вместе с тем $A_i$ имеет не более $s$ ненулевых компонент, а значит потребуется вычисление $s$ корректирующих множителей и умножение на них соответствующих значений из набора.
    \item Как можно видеть из первого пункта, производить умножение разреженных векторов следует за $\mathcal{O}(s)$, учитывая лишь ненулевые компоненты. В смысле программной реализации это означает необходимость использования для кэшируемых значений и для строк матрицы $A$ разреженного представления, то есть хранение лишь пар индекс-значение ненулевых элементов~--- тогда, очевидно, сложность арифметических операций для таких векторов будет пропорциональна сложности цикла с элементарными арифметическими операциями, число итераций которого равно числу ненулевых элементов (в языке программирования python, например, такой формат хранения реализуется в методе scipy.sparse.csr\_matrix \cite{scipy}). 
    \item Аналогично, будем хранить значение $\sum_{j=1}^m{\exp\left(\left[A y_k\right]_j\right)}$, являющееся знаменателем. Его обновление осуществляется с той же сложностью, что и обновление последовательности из пункта 1 (путём вычисления суммы ненулевых слагаемых, прибавляемых к каждому значению из набора).
    \item Поскольку вычисление указанного выражения требует вычисления значений экспонент, может происходить переполнение типов. Для решения этой проблемы стандартно применяется exp-normalize trick \cite{blanchard2019accurately}. Однако для его применения следует также хранить значение $\max_{j=1,...,m} \left[A y_k\right]_j$. Вместе с тем, нет необходимости знать именно это значение, или, иначе, знать его точно~--- достаточно лишь его приближения, чтобы значения в показателях экспонент были малы, так что перевычислять эту величину можно гораздо реже: например, раз в $m$ итераций метода, в результате чего амортизированная сложность будет равна $\mathcal{O}(s)$.
\end{enumerate}
Итак, в дальнейших рассуждениях можно полагать, что итерация покомпонентного метода при решении соответствующей вспомогательной задачи, имеет амортизированную сложность $\mathcal{O}(s)$.

Далее, рассмотрим подробнее вопрос о величине констант гладкости данного функционала. Можно выписать асимптотические формулы для $L$ и $\overline{L} = \frac{1}{n} \sum_{i=1}^n L_i$:
$$L = \max_{j=1,...,m} \|A_j\|_2^2 = \mathcal{O}(n), \quad \overline{L} = \frac{1}{n} \sum_{i=1}^n \max_{j=1,...,m} |A_{j i}| = \mathcal{O}(1).$$
Используя эти оценки, уточним вычислительную сложность методов FGM и Catalyst CDM в применении к данной задаче:
$$T_{FGM} = \mathcal{O}\left(sn^{3/2} \cdot \frac{1}{\varepsilon^{1/2}}\right), \quad T_{CCDM} = \widetilde{\mathcal{O}}\left(sn \cdot \frac{1}{\varepsilon^{1/2}}\right).$$
Таким образом, в теории, применение метода Catalyst CDM для решения данной задачи позволяет, по сравнению с FGM, редуцировать в асимптотической оценке вычислительной сложности множитель порядка $\mathcal{O}(\sqrt{n})$. Практически, это означает, что предложенный метод разумно применять для задач большой размерности.

Сравним теперь работу предложеннного в статье метода (Catalyst CDM) с рядом альтернативных подходов: градиентным спуском (GM), быстрым градиентным методом (FGM), покомпонентным спуском (CDM) и ускоренным покомпонентным спуском (ACDM), на примере задачи~\eqref{eq:softmax} с искусственно сгенерированной двумя различными способами матрицей $A$. На рис. \ref{fig:umc1} и \ref{fig:umc2} представлены графики сходимости рассматриваемых методов: по оси абсцисс отложено время работы методов в секундах, по оси ординат~--- невязка по функции в логарифмическом масштабе ($f_*$ находится путём поиска соответствующей точки $x_*$ с помощью метода FGM, настроенного на точность, заведомо значительно превосходящую возможную для достижения на выбранном временном промежутке).

\begin{figure}[!ht]
\begin{minipage}{0.49\textwidth}
    \centering
    \includegraphics[width=\linewidth]{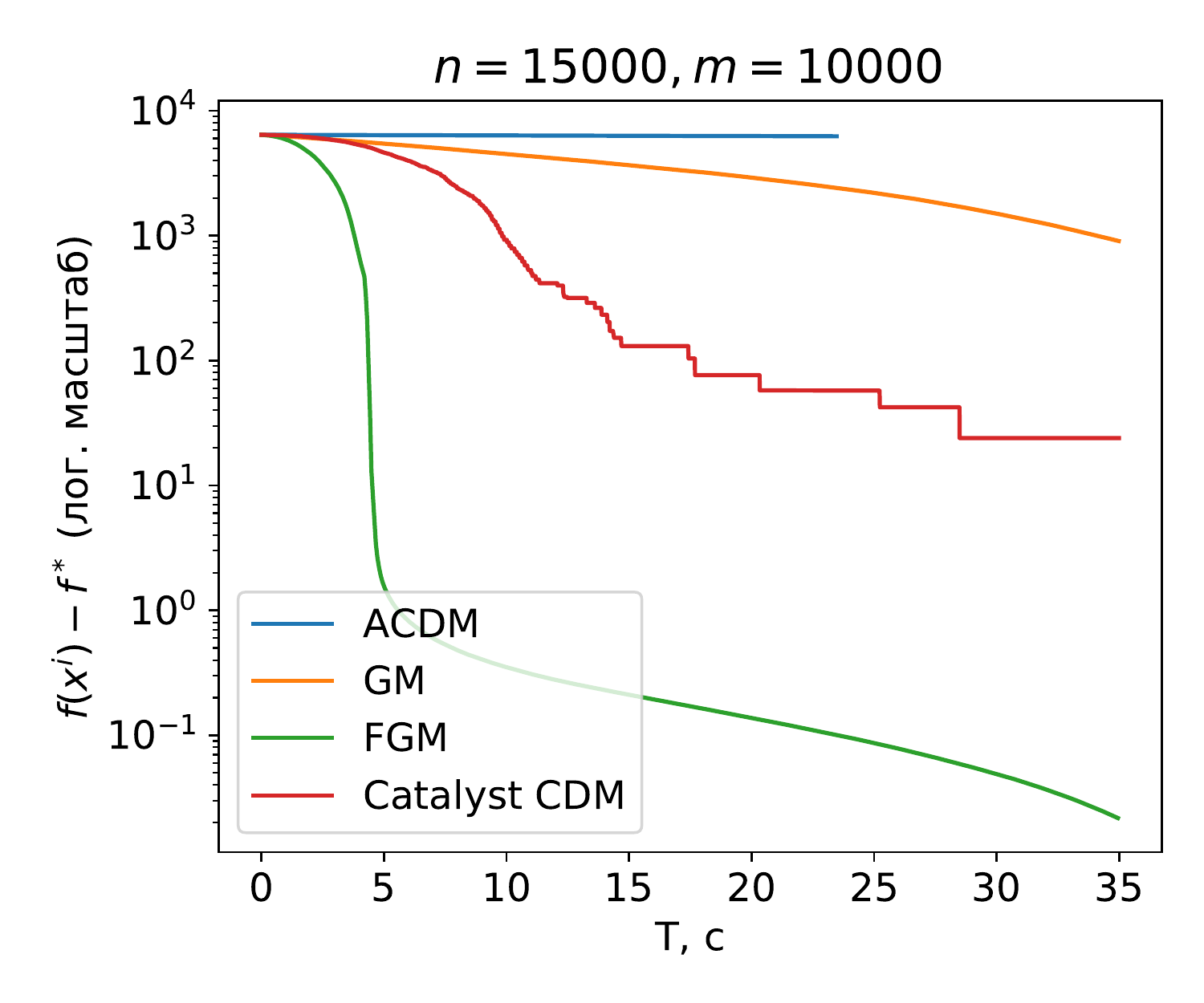}
    \caption{Сходимость методов для задачи SoftMax~\eqref{eq:softmax} с равномерно разреженной случайной матрицей.}
    \label{fig:umc1}
\end{minipage}
\hfill
\begin{minipage}{0.49\textwidth}
    \centering
    \includegraphics[width=\linewidth]{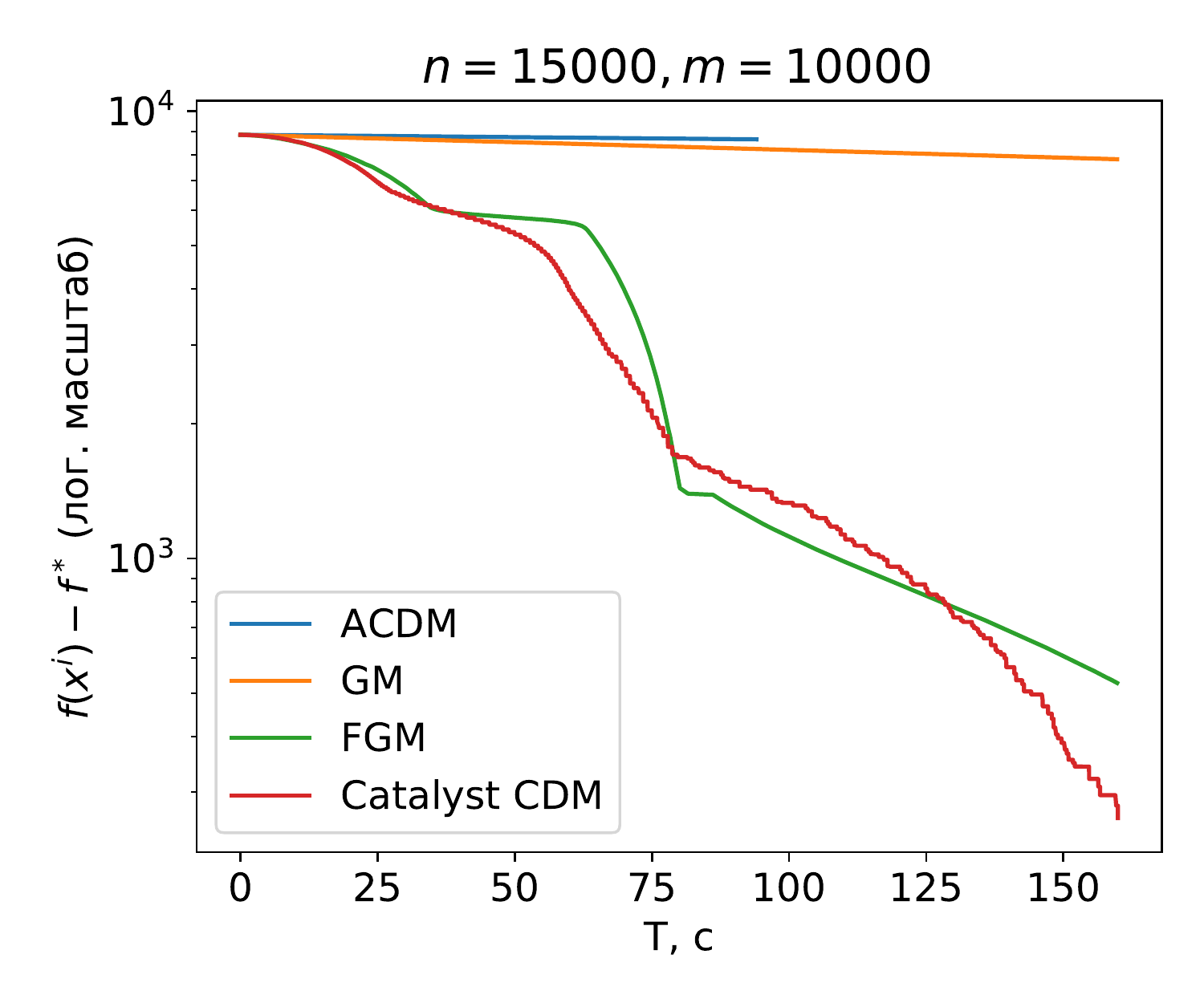}
    \caption{Сходимость методов для задачи SoftMax~\eqref{eq:softmax} с неоднородно разреженной случайной матрицей.}
    \label{fig:umc2}
\end{minipage}
\end{figure}

На рис. \ref{fig:umc1} представлен случай, для которого все элементы матрицы $A$ являются н.о.р. случайными величинами из дискретного равномерного распределения $A_{j i} \in \mathcal{U}\{0, 1\}$, при этом число ненулевых элементов составляет $s \approx 0.2 m$, и параметр $\gamma=0.6$ (так же как и во втором случае). В такой постановке предложенный метод демонстрирует более быструю сходимость по сравнению со всеми сравниваемыми методами, за исключением FGM. В то же время, в постановке, отражённой на рис. \ref{fig:umc2}, при которой число ненулевых элементов, по сравнению с первым случаем, увеличено до $s \approx 0.75 m$, а матрица генерируется неравномерно в соответствии с правилом: $0.9 m$ строк с $0.1 n$ ненулевых элементов и $0.1 m$ строк с $0.9 n$ ненулевых элементов, а также одна и строк матрицы является полностью неразреженной, предложенный метод сходится быстрее FGM. Это объясняется тем, что в этом случае $L = n$, тогда как $\overline{L}$ по прежнему остаётся достаточно мало, в результате чего константа в предложенном методе оказывает заметно меньшее влияние на вычислительную сложность, чем в случае FGM. Из результатов эксперимента также можно отметить, что гораздо существеннее степени разреженности задачи на эффективность предложенного метода влияет характер её покомпонентной гладкости.
\newpage
\section{Заключение} 

В данной работе предлагается вариант покомпонентного метода, ускоренного с помощью универсальной проксимальной оболочки ``ускоренный метаалгоритм``. Проведённый теоретический анализ предложенного метода позволяет утверждать, что зависимость его вычислительной сложности от размерности задачи и требуемой точности решения не уступает прочим методам, используемым для оптимизации выпуклых липшицево гладких функций, а оценка вычислительной сложности сравнима с оценкой быстрого градиентного метода. При этом, в предложенной схеме сохраняются свойства, характерные для классического покомпонентного метода, в том числе возможность использования свойств покомпонентной гладкости функции. Приведённые численные эксперименты подтверждают практическую эффективность метода, и также подчёркивают особенную релевантность предложенного подхода для часто возникающей в различных приложениях задачи оптимизации функции вида SoftMax.

Данная статья представляет результат работы над проектом, предложенным А.В.~Гасниковым в рамках проектной смены\footnote{Ссылка на сайт проектной смены: \url{https://sochisirius.ru/obuchenie/graduates/smena673/3258}} "Современные методы теории информации, оптимизации и управления"\;Сириус 2-23~августа~2020 г. Авторы выражают благодарность организатору проектной смены А.С.~Ненашеву за создание комфортных условий для работы.

\bibliographystyle{plain}
\bibliography{main}

\end{document}